\allowdisplaybreaks \numberwithin{equation}{section}
\numberwithin{equation}{section}
\newtheorem{theorem}{Theorem}[section]
\theoremstyle{definition}
\theoremstyle{remark}
\newtheorem{remark}[theorem]{Remark}
\begin{document}

\title
{Nonlinear Orbital Stability for Planar Vortex Patches}

 \author{Daomin Cao, Guodong Wang, Jie Wan}
\address{Institute of Applied Mathematics, Chinese Academy of Science, Beijing 100190, and University of Chinese Academy of Sciences, Beijing 100049,  P.R. China}
\email{dmcao@amt.ac.cn}
\address{Institute of Applied Mathematics, Chinese Academy of Science, Beijing 100190, and University of Chinese Academy of Sciences, Beijing 100049,  P.R. China}
\email{wangguodong14@mails.ucas.ac.cn}

\address{Institute of Applied Mathematics, Chinese Academy of Science, Beijing 100190, and University of Chinese Academy of Sciences, Beijing 100049,  P.R. China}
\email{wanjie15@mails.ucas.ac.cn}


\begin{abstract}
In this paper, we prove nonlinear orbital stability for steady vortex patches that maximize the kinetic energy among isovortical rearrangements in a planar bounded domain. As a result, nonlinear stability for an isolated vortex patch is proved. The proof is based on conservation of energy and vorticity, which is an analogue of the classical Liapunov function method.
\end{abstract}

\maketitle
\section{Introduction}
This paper proves that the set of vortex patches as maximizers of the kinetic energy on an isovortical surface(a set of functions with the same distributional function) is orbitally stable for the incompressible Euler equations in a planar bounded domain. Here orbital stability means: if at initial time the flow is close to a maximizer, then it remains close to the set of maximizers. As a consequence of orbital stability, we show stability for isolated maximizers. The key point of the proof is that for an ideal fluid the vorticity moves on an isovortical surface and the kinetic energy is conserved.

In \cite{T}, steady vortex patches were constructed by maximizing the kinetic energy subject to some constraints for vorticity. Burton in \cite{B2,B4} considered more general cases. He constructed various steady vortex flows by maximizing the kinetic energy on rearrangement class, which included the vortex patch solution in \cite{T} as a special case. An interesting and unsolved problem is the stability of these vortex patches. For a single concentrated vortex patch, stability was proved in \cite{CW}, where local uniqueness played an essential role.
But for vortex patches that are not sufficiently concentrated uniqueness is still an open problem, and the method in \cite{CW} does not apply anymore. In this paper, we turn to prove orbital stability for the set of maximizers. The results are stated precisely in Section 2.

For certain domains, there may be no isolated maximizers. For example, for an annular domain, the functional and the constraint are both invariant under rotations, so the set of maximizers is also invariant under rotations. That is the reason we consider orbital stability here. However, if there is an isolated maximizer, we can prove its stability, see Theorem \ref{101} below.

The study of stability for steady Euler flows has a long history. Here we comment on some of the relevant and significant results.  In \cite{K} Kelvin proved linear stability for circular vortex patches in $\mathbb R^2$. Later Love \cite{Lo} proved linear stability for a rotating Kirchhoff elliptical vortex patch. In \cite{A,A2}, Arnold firstly considered nonlinear stability for smooth steady Euler flows, moreover, he came up with the idea that a steady planar Euler flow could be seen as a critical point of the energy on a constraint surface, and stability could be obtained by some kind of non-degenerate condition for this critical point. In 1985, by establishing a relative variational principle for the energy, Wan and Pulvirenti \cite{WP} proved nonlinear stability for circular vortex patches in an open disk. For general bounded domains, Burton in \cite{B3} proved nonlinear stability for steady vortex flows as the strict local maximizer of the energy on rearrangement class. Similar idea was used to prove nonlinear orbital stability for vortex pairs in the whole plane in \cite{B5}. This paper is mostly inspired by \cite{B3} and \cite{B5}.

 The main difficulty in proving orbital stability is to obtain compactness for a particular weakly convergent sequence. In \cite{B5}, compactness was proved by a Concentration-Compactness argument. Here for vortex patches in a bounded domain the proof is relatively simple. In fact, we can prove that any maximizing sequence is compact in $L^p$ norm. The key point is that the weak limit of any maximizing sequence must be a vortex patch, which excludes oscillation and ensures compactness.

Our result also gives a short proof of the stability theorem proved in \cite{B3} for vortex patches and includes the result in \cite{WP} as a special case.

\section{Main Results}

In this section, we state the main result. To begin with, we recall some known facts about the 2-D Euler equations.

Throughout this paper we assume $D$ to be a bounded domain(not necessarily simply-connected) with smooth boundary, $G$ is the Green function for $-\Delta$ in $D$ with zero
boundary condition.

We consider the motion of an ideal fluid in $D$. The governing equations are the following incompressible Euler system

\begin{equation}\label{2}
\begin{cases}
 \nabla\cdot\mathbf{v}=0 \,\,\,\,\,\,\,\,\,\,\,\,\,\,\,\,\,\,\,\,\,\,\,\,\,\,\,\,\,\,\,\,\,\,\,\,\,\,\,\,\,\,\,\,\,\,\text{in $D$},
 \\ \partial_t\mathbf{v}+(\mathbf{v}\cdot\nabla)\mathbf{v}=-\nabla P \,\,\,\,\,\,\,\,\,\,\,\text{in $D$},\\
 \mathbf{v}(x,0)=\mathbf{v}_0(x)\,\,\,\,\,\,\,\,\,\,\,\,\,\,\,\,\,\,\,\,\,\,\,\,\,\,\,\,\,\,\,\text{in $D$},
 \\ \mathbf{v}\cdot \vec{n}=0 \,\,\,\,\,\,\,\,\,\,\,\,\,\,\,\,\,\,\,\,\,\,\,\,\,\,\,\,\,\,\,\,\,\,\,\,\,\,\,\,\,\,\,\,\,\,\,\text{on $\partial D$},
\end{cases}
\end{equation}
where $\mathbf{v}$ is the velocity field, $P$ is the pressure, $\mathbf{v}_0(x)$ is the initial velocity, and $\vec{n}$ is the outward unit normal of $\partial D$. Here we impose the impermeability boundary condition $\mathbf{v}\cdot \vec{n}=0$.

We define the vorticity function $\omega = \partial_1 v_2-\partial_2v_1$. Using the identity $\frac{1}{2}\nabla|\mathbf{v}|^2=(\mathbf{v}\cdot\nabla)\mathbf{v}+J\mathbf{v}\omega$,
 the second equation of $\eqref{2}$ becomes
\begin{equation}\label{3}
 \partial_t\mathbf{v}+\nabla(\frac{1}{2}|\mathbf{v}|^2+P)-J\mathbf{v}\omega=0,
\end{equation}
where $J(v_1,v_2)=(v_2,-v_1)$ denotes clockwise rotation through $\frac{\pi}{2}$. Taking the curl in $\eqref{3}$ gives

\begin{equation}\label{499}
 \partial_t\omega+\mathbf{v}\cdot\nabla\omega=0.
\end{equation}

By the divergence-free condition $\nabla\cdot\mathbf{v}=0$ and the boundary condition $\mathbf{v}\cdot \vec{n}=0$, $\mathbf{v}$ can be written as
\begin{equation}
\mathbf{v}=J\nabla\psi
\end{equation}
 for some function $\psi$ called the stream function(see \cite{MPu}, Chapter 1, Theorem 2.2). Obviously $\psi$ satisfies
\begin{equation}\label{788}
 \begin{cases}
-\Delta \psi=\omega\text{ \quad \,\quad in $D$,}\\
\psi= \text{constant}  \text{\quad on $\partial D$.}
\end{cases}
\end{equation}
Note that for multi-connected domains, $\psi$ is uniquely determined by \eqref{788} provided boundary circulations are prescribed. In this paper, we assume that the stream function vanishes on $\partial D$, i.e.,
\begin{equation}
\psi(x)=\int_DG(x,y)\omega(y)dy.
\end{equation}

Using the notation
 $\partial(\psi,\omega)\triangleq\partial_1\psi\partial_2\omega-\partial_2\psi\partial_1\omega$, $\eqref{499}$ can be written as
\begin{equation}\label{599}
\partial_t\omega+\partial(\omega,\psi)=0.
\end{equation}
Integrating by parts gives the following weak form of $\eqref{599}$:
 \begin{equation}\label{997}
  \int_D\omega(x,0)\xi(x,0)dx+\int_0^{+\infty}\int_D\omega(\partial_t\xi+\partial(\xi,\psi))dxdt=0
  \end{equation}
for all $\xi\in C_0^{\infty}(D\times[0,+\infty))$.

According to Yudovich \cite{Y}, for any initial vorticity $\omega(x,0)\in L^{\infty}(D)$ there is a unique solution to $\eqref{997}$ and
 $\omega(x,t)\in L^{\infty}(D\times(0,+\infty))\cap C([0,+\infty);L^p(D)), \forall \,\,p\in [1,+\infty)$. Moreover, $\omega(x,t)\in R_{\omega_0}$ for all $t\geq 0$. Here $R_{\omega}$ denotes the rearrangement class of a given function $\omega$, that is,
 \begin{equation}
 R_\omega\triangleq\{ v |  |\{v>a\}|=|\{\omega>a\}| ,\forall a\in \mathbb R^1\}.
 \end{equation}
where $|A|$ denotes the area of a set $A\subset\mathbb R^2$. For convenience, we also write $\omega(x,t)$ as $\omega_t(x)$.

If $\omega$ is a solution of \eqref{997} and is independent of $t$, it is called steady. In this paper, we consider steady vortex patch solution having the form $\omega=\lambda I_A$, where $I_{A}$ denotes the characteristic function of some measurable set $A$, i.e., $I_A(x)=1$ in $A$ and $I_A=0$ elsewhere.  It is easy to see that $\omega$ is steady if and only if
\begin{equation}\label{888}
\int_D\omega\partial(\xi,\psi) dx=0,\text{ for all $\xi\in C^\infty_0(D).$}
\end{equation}

There are many ways to construct steady vortex patches. Here we consider the construction in \cite{T}. Define
 \[K\triangleq\{\omega\in L^{\infty}(D)|\,\,0\leq \omega\leq \lambda,\,\int_{D}\omega(x)dx=1\},\]
 where $\lambda$ is any given positive constant.  For any $\omega\in K$, the kinetic energy of $\omega$ is defined by
\begin{equation}
E(\omega)\triangleq\frac{1}{2}\int_D \int_D G(x,y)\omega(x)\omega(y)dxdy.
\end{equation}

For planar vortex flow the kinetic energy is conserved, i.e., if $\omega_t\in L^\infty(D)$ is a solution to \eqref{997} with initial vorticity $\omega_0$, then $E(\omega_t)=E(\omega_0)$ for all $t\geq0$, see Theorem 14 in \cite{B3} for a detailed proof. In this paper, we use $E$ as the Liapunov function for the Euler dynamical system to obtain stability.

\begin{theorem}[Turkington, \cite{T}]\label{51}
  $E$ attains its maximum on $K$ and any maximizer is a steady vortex patch.
\end{theorem}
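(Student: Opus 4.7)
The theorem has two parts: existence of a maximizer of $E$ on $K$, and the structural claim that every maximizer is a steady vortex patch.

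For existence, the plan is the direct method. Since $K$ is bounded in $L^\infty(D)$ it is bounded in $L^2(D)$, so any maximizing sequence $\{\omega_n\}\subset K$ admits a subsequence converging weakly in $L^2(D)$ to some $\omega^*$; because $K$ is convex and norm--closed, Mazur's lemma yields $\omega^*\in K$. The Green operator $T\omega(x):=\int_D G(x,y)\omega(y)\,dy$ is Hilbert--Schmidt on $L^2(D)$ (the $2$D Green function has only a logarithmic singularity, so $G\in L^2(D\times D)$), hence compact, so $T\omega_n\to T\omega^*$ strongly in $L^2(D)$. Therefore $E(\omega_n)=\tfrac{1}{2}\langle T\omega_n,\omega_n\rangle\to E(\omega^*)$, so $\omega^*$ is a maximizer.

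For the variational characterization, fix a maximizer $\omega^*$ and write $\psi^*=T\omega^*$. For any $\omega\in K$, convexity gives $\omega^*+t(\omega-\omega^*)\in K$ for $t\in[0,1]$, and since $G$ is symmetric,
\begin{equation*}
E\bigl(\omega^*+t(\omega-\omega^*)\bigr)=E(\omega^*)+t\int_D\psi^*(\omega-\omega^*)\,dx+t^2\,E(\omega-\omega^*).
\end{equation*}
Letting $t\to 0^+$ and using maximality forces $\int_D\psi^*\omega^*\,dx\geq\int_D\psi^*\omega\,dx$ for every $\omega\in K$; equivalently, $\omega^*$ maximizes the linear functional $\omega\mapsto\int_D\psi^*\omega\,dx$ over $K$. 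The classical bathtub principle then produces $\mu\in\mathbb{R}$ such that $\omega^*=\lambda$ a.e.\ on $\{\psi^*>\mu\}$, $\omega^*=0$ a.e.\ on $\{\psi^*<\mu\}$, and $\omega^*\in[0,\lambda]$ a.e.\ on $L:=\{\psi^*=\mu\}$.

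The main obstacle is to remove the bathtub ambiguity on $L$ and then verify steadiness. For the first, one uses the regularity $\psi^*\in W^{2,p}(D)$ for all $p<\infty$ (from $-\Delta\psi^*=\omega^*\in L^\infty(D)$ with zero boundary data together with Calder\'on--Zygmund estimates); the Stampacchia lemma applied to $\psi^*$ and then to each component of $\nabla\psi^*$ gives $D^2\psi^*=0$ a.e.\ on $L$, so $\omega^*=-\Delta\psi^*=0$ a.e.\ on $L$. Combined with the bathtub structure this yields $\omega^*=\lambda I_A$ with $A=\{\psi^*>\mu\}$ and $|A|=1/\lambda$. For steadiness, approximate the step profile $F(s)=\lambda I_{(\mu,\infty)}(s)$ by smooth nondecreasing $F_k$ with primitives $H_k$; for every $\xi\in C_0^\infty(D)$,
\begin{equation*}
\int_D F_k(\psi^*)\,\partial(\xi,\psi^*)\,dx=\int_D\partial\bigl(\xi,H_k(\psi^*)\bigr)\,dx=\int_D\operatorname{div}\bigl(\xi\,J\nabla H_k(\psi^*)\bigr)\,dx=0,
\end{equation*}
by the divergence theorem and the compact support of $\xi$. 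Passing to the limit $F_k(\psi^*)\to\omega^*$ in $L^1(D)$ verifies \eqref{888}, confirming that $\omega^*$ is a steady vortex patch.
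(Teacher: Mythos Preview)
Your argument is correct, but it differs from the paper's in two notable ways.

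First, the order of the structural steps is reversed. The paper proves steadiness \emph{before} the patch structure, by taking variations $\omega^t=\omega^*\circ\Phi_t$ generated by the area--preserving flow of $J\nabla\xi$; since $\Phi_t$ is measure--preserving, $\omega^t$ lies in the same rearrangement class (hence in $K$) regardless of any structural information about $\omega^*$, and differentiating $E(\omega^t)$ at $t=0$ gives \eqref{888} directly. Your steadiness argument, by contrast, relies on having already established $\omega^*=F(\psi^*)$ with $F=\lambda I_{(\mu,\infty)}$, and then uses the chain--rule identity $F_k(\psi^*)\,\partial(\xi,\psi^*)=\partial(\xi,H_k(\psi^*))$ together with the divergence theorem. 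Both approaches are valid; the flow--variation route is more robust (it applies to any maximizer on a rearrangement class, not only patches), while yours is more elementary once the bathtub structure is in hand.

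Second, for the bathtub step the paper uses localized perturbations $\omega^*+s(z_0-z_1)$ supported where $\omega^*\le\lambda-\delta$ and $\omega^*\ge\delta$ respectively, which amounts to the same first--order inequality you derive from convex interpolation and the bathtub principle; these are two phrasings of the same variational idea. Your existence step via the Hilbert--Schmidt property of $T$ on $L^2$ (using $G\in L^2(D\times D)$) is a clean alternative to the paper's $L^\infty$ weak--star argument with $G\in L^1(D\times D)$. One small point worth making explicit in your limit $F_k(\psi^*)\to\omega^*$: choose the approximants so that $F_k(\mu)=0$ (e.g.\ $F_k\le F$), since the level set $\{\psi^*=\mu\}$ need not have measure zero even though $\omega^*=0$ a.e.\ there.
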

For reader's convenience and completeness, we give the proof of Theorem \ref{51} in Section 3.

In the sequel, we denote $M$ the set of all maximizers, and define
\begin{equation}
R\triangleq\{\omega|\omega=\lambda I_A, \lambda|A|=1\}.
\end{equation}
By Theorem \ref{51}, any maximizer is a vortex patch, so $M\subset R$.
\begin{remark}

It is easy to verify that $sup_{\omega\in K}E(\omega)=sup_{\omega\in R}E(\omega)$, which means that $M$ is in fact the set of maximizers of $E$ on rearrangement class $R$. The variational problem on rearrangement class has been considered by Burton, see \cite{B2}, \cite{B4} for example.
\end{remark}

Our purpose in this paper is to prove the orbital stability of $M$.
\begin{theorem}\label{666}
$M$ is orbitally stable. More specifically, for any $\varepsilon>0$, there exists $\delta>0$, such that for any $\omega_0\in K$, $dist(\omega_0,M)<\delta$, we have $dist(\omega_t,M)<\varepsilon$ for all $t\geq0$. Here the distance is in the sense of $L^p$ norm for any $1\leq p<+\infty$, $\omega_t$ is the solution to \eqref{997} with initial vorticity $\omega_0$.
\end{theorem}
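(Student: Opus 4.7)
The plan is to run the classical Lyapunov method built on two conservation laws for weak solutions of \eqref{997}: conservation of the kinetic energy $E(\omega_t)=E(\omega_0)$, and preservation of the rearrangement class $\omega_t\in R_{\omega_0}$. Because the constraints $0\le\omega\le\lambda$ and $\int_D\omega=1$ defining $K$ are both rearrangement invariant, $\omega_0\in K$ forces $\omega_t\in K$ for all $t\ge 0$; and the bound $\sup_x\int_D|G(x,y)|\,dy<\infty$ makes $E$ Lipschitz on $K$ with respect to any $L^p$ norm. I would argue by contradiction: if the conclusion failed, there would exist $\varepsilon_0>0$ and sequences $\omega_0^n\in K$, $t_n\ge 0$, $\tilde\omega^n\in M$ with $\|\omega_0^n-\tilde\omega^n\|_{L^p}\to 0$ but $\mathrm{dist}(\omega_{t_n}^n,M)\ge\varepsilon_0$. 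Writing $E_{\max}:=\max_K E$ (attained by Theorem \ref{51}), Lipschitz continuity gives $E(\omega_0^n)\to E_{\max}$, conservation yields $E(\omega_{t_n}^n)\to E_{\max}$, and $\omega_{t_n}^n\in K$. Hence the whole theorem reduces to showing that any maximizing sequence for $E$ on $K$ is $L^p$-precompact with limit in $M$.

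This is the content of the following compactness lemma: \emph{if $\{\omega_n\}\subset K$ and $E(\omega_n)\to E_{\max}$, then a subsequence converges in $L^p(D)$, for every $1\le p<\infty$, to some $\omega_*\in M$.} For its proof, uniform $L^\infty$-boundedness yields, along a subsequence, $\omega_n\rightharpoonup^* \omega_*$ for some $\omega_*\in L^\infty(D)$, and $\omega_*\in K$ since $K$ is weak-$*$ closed. The Green operator $T\omega(x):=\int_D G(x,y)\omega(y)\,dy$ is compact from $L^\infty(D)$ into $C(\overline D)$ by elliptic regularity, so $T\omega_n\to T\omega_*$ uniformly and
\[
E(\omega_n)=\tfrac12\int_D\omega_n\,T\omega_n\,dx\longrightarrow\tfrac12\int_D\omega_*\,T\omega_*\,dx=E(\omega_*),
\]
forcing $\omega_*\in M$; by Theorem \ref{51}, $\omega_*=\lambda I_{A_*}$ with $\lambda|A_*|=1$. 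To upgrade from weak-$*$ to strong convergence I exploit this patch structure: $0\le\omega_n\le\lambda$ yields $\omega_n^2\le\lambda\omega_n$, hence $\int_D\omega_n^2\,dx\le\lambda$; meanwhile $\int_D\omega_*^2\,dx=\lambda^2|A_*|=\lambda$, and testing weak-$*$ convergence against $\omega_*\in L^1(D)$ gives
\[
\int_D\omega_n\omega_*\,dx=\lambda\int_{A_*}\omega_n\,dx\longrightarrow\lambda\int_{A_*}\omega_*\,dx=\lambda^2|A_*|=\lambda.
\]
Expanding the square, $\limsup_n\int_D(\omega_n-\omega_*)^2\,dx\le\lambda-2\lambda+\lambda=0$, so $\omega_n\to\omega_*$ in $L^2(D)$, and the uniform $L^\infty$ bound promotes this to convergence in every $L^p(D)$ with $1\le p<\infty$.

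Applying the lemma to $\{\omega_{t_n}^n\}$ produces a subsequence converging in $L^p$ to some $\omega_*\in M$, so $\mathrm{dist}(\omega_{t_n}^n,M)\to 0$ along that subsequence, contradicting $\mathrm{dist}(\omega_{t_n}^n,M)\ge\varepsilon_0$. The main obstacle is the compactness lemma, and specifically the passage from weak-$*$ to strong $L^p$ convergence: weak-$*$ convergence alone admits oscillation, so a priori the weak limit need not take only the values $0$ and $\lambda$ of a patch. What saves the argument is that Theorem \ref{51} forces the weak limit of any maximizing sequence to be a characteristic function, and combined with the pointwise bound $\omega_n\le\lambda$ this rules out oscillation via the simple $L^2$-identity above. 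This is precisely the feature of the bounded-domain patch setting that makes the concentration-compactness machinery of \cite{B5} unnecessary here.
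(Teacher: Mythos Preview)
Your argument is correct and follows the same architecture as the paper's: argue by contradiction, use energy conservation and rearrangement invariance to reduce to a compactness lemma for maximizing sequences in $K$, pass to a weak limit, observe via Theorem~\ref{51} that the limit is a patch in $M$, and then exploit the patch structure of the limit to upgrade weak to strong convergence. The only substantive difference is in this last step. The paper fixes an auxiliary exponent $q>p$, shows $\int_D|u_n|^q\to\int_D|u|^q=\lambda^{q-1}$ from the constraints $0\le u_n\le\lambda$, $\int_D u_n=1$, and then invokes uniform convexity of $L^q$ (Radon--Riesz) to conclude $u_n\to u$ in $L^q$ and hence in $L^p$. You instead expand $\|\omega_n-\omega_*\|_{L^2}^2$ directly, using $\int\omega_n^2\le\lambda=\int\omega_*^2$ and $\int\omega_n\omega_*\to\lambda$ from weak-$*$ convergence tested against $\omega_*=\lambda I_{A_*}\in L^1$. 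Your route is a bit more elementary (no appeal to uniform convexity), but both implementations encode the same phenomenon: once the weak limit is known to be a characteristic function, the pointwise bound $0\le\omega_n\le\lambda$ together with $\int\omega_n=1$ forces norm convergence and hence rules out oscillation.
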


\begin{remark}
In \cite{CW,MPu,Ta}, the perturbed vorticity $\omega_0$ is restricted on the isovortical surface $R$, here we extend the perturbation set to $K$.
\end{remark}

For certain domains, the maximizer of $E$ may be not isolated in $L^p(D)$. For example, when $D$ is a ring, i.e., $D=B_R(x_0)\verb|\|B_r(x_0)$ for some $x_0\in \mathbb R^2, 0<r<R<+\infty$, the rotation of any maximizer is still a maximizer. But once there is an isolated maximizer, we can prove stability.

\begin{theorem}\label{101}
Assume that $\omega_\lambda$ is an isolated maximizer of $E$ on $K$, i.e., there exists some $\delta_0>0$ such that for any $\omega\in K$, $0<dist(\omega,\omega_\lambda)<\delta_0$, we have $E(\omega)<E(\omega_\lambda)$, then $\omega_\lambda$ is stable. More specifically, for any $\varepsilon>0$, there exists $\delta>0$, such that for any $\omega_0\in K$, $dist(\omega_\lambda,\omega_0)<\delta$, we have $dist(\omega_\lambda,\omega_t)<\varepsilon$ for all $t\geq0$. Here the distance is in the sense of $L^p$ norm for any $1\leq p<+\infty$, $\omega_t$ is the solution to \eqref{997} with initial vorticity $\omega_0$.
\end{theorem}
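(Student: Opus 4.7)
The plan is to deduce Theorem \ref{101} from the orbital stability of the set $M$ established in Theorem \ref{666}, using the isolation hypothesis to create an $L^p$ ``gap'' that a continuous-in-time trajectory cannot cross.

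First I would translate the isolation hypothesis into a quantitative gap: if $\omega_\lambda$ is isolated with radius $\delta_0$, then every $\omega\in M\setminus\{\omega_\lambda\}$ must satisfy $dist(\omega,\omega_\lambda)\geq\delta_0$, since otherwise $\omega$ would be a distinct maximizer lying in the ball $\{v\in K:dist(v,\omega_\lambda)<\delta_0\}$, contradicting isolation. Given the target $\varepsilon>0$, set $\varepsilon':=\min(\varepsilon,\delta_0/3)$ and apply Theorem \ref{666} with $\varepsilon'$ to obtain some $\delta_1>0$ such that $dist(\omega_0,M)<\delta_1$ implies $dist(\omega_t,M)<\varepsilon'$ for every $t\geq 0$. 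Then take $\delta:=\min(\delta_1,\varepsilon')$; for any $\omega_0\in K$ with $dist(\omega_0,\omega_\lambda)<\delta$, the inequality $dist(\omega_0,M)\leq dist(\omega_0,\omega_\lambda)<\delta_1$ triggers orbital stability, so for each $t\geq 0$ there exists $\omega_t^\ast\in M$ with $dist(\omega_t,\omega_t^\ast)<\varepsilon'$.

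Next I would exploit the gap. For each fixed $t\geq 0$, either $\omega_t^\ast=\omega_\lambda$, in which case $dist(\omega_t,\omega_\lambda)<\varepsilon'$, or $\omega_t^\ast\neq\omega_\lambda$, in which case the reverse triangle inequality together with the gap yields $dist(\omega_t,\omega_\lambda)\geq\delta_0-\varepsilon'\geq 2\delta_0/3$. Hence the function $f(t):=dist(\omega_t,\omega_\lambda)$ takes values only in $[0,\varepsilon')\cup(2\delta_0/3,\infty)$, avoiding the forbidden window $[\varepsilon',2\delta_0/3]$. By Yudovich's theorem, $\omega_t\in C([0,\infty);L^p(D))$, so $f$ is continuous in $t$. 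Since $f(0)<\delta\leq\varepsilon'$ starts in the first component of the allowed set, a simple connectedness argument (the preimage of each component is open, and $[0,\infty)$ is connected) forces $f(t)<\varepsilon'\leq\varepsilon$ for all $t\geq 0$, which is exactly the stability conclusion sought.

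The only nontrivial step is the gap argument in the second paragraph; the rest reduces to Theorem \ref{666} plus the triangle inequality, with no fresh compactness or variational work. The main conceptual point is that once orbital stability of the whole set $M$ is available, isolation of a single maximizer automatically upgrades to pointwise stability, because the orbit, being continuous in $L^p$, cannot jump instantaneously across the gap of size at least $2\delta_0/3$ separating $\omega_\lambda$ from the remainder of $M$.
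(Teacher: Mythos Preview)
Your proposal is correct and follows essentially the same route as the paper: both arguments first extract the gap $dist(\omega_\lambda,M\setminus\{\omega_\lambda\})\geq\delta_0$ from isolation, then invoke the orbital stability of $M$ (Theorem \ref{666}) to trap $\omega_t$ near $M$, and finally use continuity of $t\mapsto\omega_t$ in $L^p$ to rule out a jump from the $\omega_\lambda$-component to the rest of $M$. The only cosmetic difference is that the paper phrases the last step as an intermediate-value contradiction (finding a time $t_2$ at which $dist(\omega_{t_2},\omega_\lambda)=\delta_0/2$), whereas you phrase it as a connectedness argument on the range of $f(t)=dist(\omega_t,\omega_\lambda)$; the content is identical.
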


\begin{remark}
In \cite{B3}, a more general stability theorem for isolated maximizers was proved, here we give a different and short proof in the case of vortex patches.
\end{remark}

\begin{remark}
In some cases the maximizer is unique and thus isolated. For example, when $D$ is a convex domain, there is a unique maximizer provided $\lambda$ is large enough, see \cite{CGPY}, or when $D$ is an open disc, for each $\lambda$ there is a unique maximizer, namely the circular vortex patch concentric to $D$ with radius $\frac{1}{\sqrt{\lambda\pi}}$, see \cite{BM}, Theorem 3.1.
\end{remark}

\section{Proofs}

In this section we prove the main results.

\begin{proof}[Proof of Theorem \ref{51}]

Step 1: $E$ attains its maximum. Notice that $G(x,y)\in L^{1}(D\times D)$, thus for any $\omega\in K$,
\[E(\omega)=\frac{1}{2}\int_D\int_DG(x,y)\omega(x)\omega(y)dxdy\leq \frac{1}{2}\lambda^2\int_D\int_D|G(x,y)|dxdy\leq C\lambda^2,
\]
where $C$ is a positive number depending on $D$, which means that $E$ is bounded from above on $K$. Let $\{\omega^n\}\subset K$ be a maximizing sequence. Since $K$ is bounded in $\L^\infty(D)$, $K$ is sequentially compact in the weak star topology in $L^\infty(D)$. Without loss of generality we assume that $\omega^n\rightarrow \omega^*$ weakly star in $L^\infty(D)$ for some $\omega^*\in L^\infty(D)$ as $n\rightarrow +\infty$.

We claim that $\omega^*\in K$. In fact, $\omega^n\rightarrow \omega^*$ weakly star in $L^\infty(D)$ means
\[\lim_{n\rightarrow +\infty}\int_D\omega^n\phi=\int_D\omega^*\phi\]
for any $\phi\in L^1(D)$. Choosing $\phi\equiv1$, we have
\[\lim_{n\rightarrow +\infty}\int_D\omega^n=\int_D\omega^*=1.\]
Now we prove $0\leq \omega^*\leq\lambda$ by contradiction. Suppose that $|\{\omega^*>\lambda\}|>0$, then there exists $\varepsilon_0>0$ such that $|\{\omega^*\geq\lambda+\varepsilon_0\}|>0$. Denote $A=\{\omega^*\geq\lambda+\varepsilon_0\}$, then for $\phi=I_A$ we have
\[0=\lim_{n\rightarrow +\infty}\int_D(\omega^*-\omega^n)\phi=\lim_{n\rightarrow +\infty}\int_{A}\omega^*-\omega^n.\]
On the other hand
\[\lim_{n\rightarrow +\infty}\int_{A}\omega^*-\omega^n\geq\varepsilon_0|A|>0,\]
which is a contradiction. So we have $\omega^*\leq \lambda$. Similarly we can prove $\omega^*\geq 0$.

Finally since $G(x,y)\in L^1(D\times D)$, we have $\lim_{n\rightarrow +\infty} E(\omega^n)=E(\omega^*)$, so $\omega^*$ is a maximizer of $E$.

Step 2: Any maximizer satisfies \eqref{888}. For any $\xi\in C^{\infty}_0(D)$, we define a family of transformations $\Phi_t(x)$ from $D$ to $D$ by the following equations,
\begin{equation}\label{400}
\begin{cases}\frac{d\Phi_t(x)}{dt}=J\nabla\xi(\Phi_t(x)),\,\,\,t\in\mathbb R^1, \\
\Phi_0(x)=x.
\end{cases}
\end{equation}
Since $J\nabla\xi$ is a smooth vector field with compact support, $\eqref{400}$ is solvable for all $t\in \mathbb R^1$. It is easy to verify that $J\nabla\xi$ is divergence-free, so by Liouville theorem(see \cite{MPu}, Appendix 1.1) $\Phi_t$ is area-preserving. Let $\omega^*$ be any maximizer and define a family of test functions
\begin{equation}
\omega^t(x)\triangleq\omega^*(\Phi_t(x)).
\end{equation}
Obviously $\omega^t\in K$, so $\frac{dE(\omega^t)}{dt}|_{t=0}=0$.
 Expanding $E(\omega^t)$ at $t=0$ gives
\[\begin{split}
E(\omega^t)=&\frac{1}{2}\int_D\int_DG(x,y)\omega^*(\Phi_t(x))\omega^*(\Phi_t(y))dxdy\\
=&\frac{1}{2}\int_D\int_DG(\Phi_{-t}(x),\Phi_{-t}(y))\omega^*(x)\omega^*(y)dxdy\\
=&E(\omega^*)+t\int_D\omega^*\partial(\psi^*,\xi)+o(t),
\end{split}\]
as $t\rightarrow 0$, where $\psi^*$ is the stream function. So we have
\[\int_D\omega^*\partial(\psi^*,\xi)=0.\]

 Step 3: Any maximizer is a vortex patch. Let $\omega^*$ be any maximizer and define a family of test functions $\omega^s(x)=\omega^*+s[z_0(x)-z_1(x)]$, $s>0$, where $z_0,z_1$ satisfies
\begin{equation}
\begin{cases}
z_0,z_1\in L^\infty(D),

 \\ \int_Dz_0=\int_D z_1,
 \\ z_0,z_1\geq 0,
 \\z_0=0\text{\,\,\,\,\,\,} in\text{\,\,} D\verb|\|\{\omega^*\leq\lambda-\delta\},
 \\z_1=0\text{\,\,\,\,\,\,} in\text{\,\,} D\verb|\|\{\omega^*\geq\delta\}.
\end{cases}
\end{equation}
Here $\delta$ is any positive number. Note that for fixed $z_0,z_1$ and $\delta$, $\omega^s\in K$ provided $s$ is sufficiently small. So we have
\[0\geq\frac{dE(\omega^s)}{ds}|_{s=0^+}=\int_Dz_0\psi^*-\int_Dz_1\psi^*,\]
 which gives
\[\sup_{\{\omega^*<\lambda\}}\psi^*\leq\inf_{\{\omega^*>0\}}\psi^*.\]
Since $D$ is connected and $\overline{\{\omega^*<\lambda\}}\cup\overline{\{\omega^*>0\}}=D$, we have $\overline{\{\omega^*<\lambda\}}\cap\overline{\{\omega^*>0\}}\neq\varnothing$, then by continuity of $\psi^*$,
\[\sup_{\{\omega^*<\lambda\}}\psi^*=\inf_{\{\omega^*>0\}}\psi^*.\]
Now define \[\mu\triangleq\sup_{\{\omega^*<\lambda\}}\psi^*=\inf_{\{\omega^*>0\}}\psi^*,\] we have
\begin{equation}
\begin{cases}
\omega^*=0\text{\,\,\,\,\,\,$a.e.$\,} in\text{\,\,}\{\psi^*<\mu\},
 \\ \omega^*=\lambda\text{\,\,\,\,\,\,$a.e.$\,} in\text{\,\,}\{\psi^*>\mu\}.
\end{cases}
\end{equation}
On $\{\psi^*=\mu\}$, we have $\nabla\psi^*=0\text{\,\,} a.e.$, which gives $\omega^*=-\triangle \psi^*=0$. That is,
\begin{equation}
\begin{cases}
\omega^*=0\text{\,\,\,\,\,\,$a.e.$\,} in\text{\,\,}\{\psi^*\leq\mu\},
 \\ \omega^*=\lambda\text{\,\,\,\,\,\,$a.e.$\,} in\text{\,\,}\{\psi^*>\mu\},
\end{cases}
\end{equation}
 so $\omega^*$ is a vortex patch.

\end{proof}

Now we turn to the proof of Theorem \ref{666}. The key point is compactness. Generally speaking, for a weak convergent function sequence in $K$, strong convergence may fail because of oscillation, but here for a maximizing sequence we can prove that the weak convergence limit is a vortex patch, which will be used to exclude oscillation and obtain compactness.

In the sequel, $p\in[1,+\infty)$ is fixed, and $|f|_p$ denotes the $L^p$ norm of some function $f$.
\begin{proof}[Proof of Theorem \ref{666}]
We prove by contradiction in the following.

Suppose that there exist $\varepsilon_0>0$, $\{\omega^n_0\}\subset K$, $\{t^n\}\subset \mathbb{R}^+$ such that
\begin{equation}\label{10}
dist(\omega^n_0,M)\rightarrow 0,
\end{equation}
and
\begin{equation}\label{11}
dist(\omega^n_{t_n},M)\geq\varepsilon_0,
\end{equation}
for any $n$, where $\omega^n_{t_n}$ is the solution to \eqref{997} at time $t_n$ with initial vorticity $\omega^n_0$. By vorticity conservation(see \cite{MPu}, Chapter 1) $\omega^n_{t_n}$ has the same distributional function as $\omega^n_0$(or $\omega^n_{t_n}\in R_{\omega^n_0}$), so $\omega^n_{t_n}\in K$.

From \eqref{10}, we can choose$\{v^n\}\subset M$ such that
\begin{equation}
|\omega^n_0-v^n|_p\rightarrow 0.
\end{equation}
We claim that $\{\omega^n_0\}$ is an energy maximizing sequence for $E$ on $R$. In fact,
\begin{equation}\label{19}
\begin{split}
E(\omega^n_0)-E(v^n)=&\frac{1}{2}\int_D\int_DG(x,y)\left[\omega^n_0(x)\omega^n_0(y)-v^n(x)v^n(y)\right]\\
=&\frac{1}{2}\int_D\int_DG(x,y)\left[\omega^n_0(x)\omega^n_0(y)-v^n(x)\omega^n_0(y)+v^n(x)\omega^n_0(y)-v^n(x)v^n(y)\right]\\
=&\frac{1}{2}\int_D\int_DG(x,y)\omega^n_0(y)\left[\omega^n_0(x)-v^n(x)\right]+\frac{1}{2}\int_D\int_DG(x,y)v^n(x)\left[\omega^n_0(y)-v^n(y)\right]\\
=&\frac{1}{2}\int_D\xi^n(x)\left[\omega^n_0(x)-v^n(x)\right]+\frac{1}{2}\int_D\zeta^n(y)\left[\omega^n_0(y)-v^n(y)\right]
\end{split}
\end{equation}
 where $\xi^n(x)=\int_DG(x,y)\omega^n_0(y),\zeta^n(y)=\int_DG(x,y)v^n(x)$. Since $\{\omega^n_0\},\{v^n\}$ are both bounded in $L^{\infty}(D)$, by $L^p$ estimates $\{\xi^n\}$, $\{\zeta^n\}$ are bounded in $W^{2,r}(D)$ for any $r\in[1,+\infty)$ and thus bounded in $L^{\infty}(D)$. Combining \eqref{19} we have \begin{equation}E(\omega^n_0)=E(v^n)+o(1)\end{equation} as $n\rightarrow +\infty$, which means that $\{\omega^n_0\}$ is an energy maximizing sequence.

 By energy conservation we have
\begin{equation}
E(\omega^n_0)=E(\omega^n_{t_n}),
\end{equation}
so $\{\omega^n_{t_n}\}$ is also an energy maximizing sequence.
For convenience, we write $u_n\triangleq \omega^n_{t_n}.$ Now choose $q$ to be fixed, $1\leq p<q<+\infty$, since $u_n\in K$, we know that $\{u_n\}$ is a bounded sequence in $L^q(D)$.  Without loss of generality, we assume that $u_n\rightarrow u$ weakly in $L^q(D)$.

$Claim:$ $u\in K$ and $u$ is an energy maximizer of $E$ on $K$.

$Proof\,\, of\,\, the\,\, Claim:$ Firstly, $u_n\rightarrow u$ weakly in $L^q(D)$ implies
\[\lim_{n\rightarrow +\infty}\int_Du_n\phi=\int_Du\phi\]
for any $\phi\in L^{q^*}(D)$, where $q^*=\frac{q}{q-1}$. By choosing $\phi\equiv 1$ we have
\[1=\lim_{n\rightarrow +\infty}\int_Du_n=\int_Du.\]
 Now we prove $u\leq \lambda$ by contradiction. Suppose that $|\{u>\lambda\}|>0$, then there exists $\varepsilon_1>0$ such that $|\{u>\lambda+\varepsilon_1\}|>0$. Denote $A=\{u>\lambda+\varepsilon_1\}$, then for any $\phi=I_{A}$ weak convergence implies
\[\lim_{n\rightarrow +\infty}\int_D(u-u_n)\phi=0,\]
but on the other hand
\[\lim_{n\rightarrow +\infty}\int_D(u-u_n)\phi=\int_{A}u-u_n\geq|A|\varepsilon_1>0,\]
which is a contradiction. Similar argument gives $ u\geq 0$. Finally, since $G\in L^1(D\times D)$, we have $\lim_{n\rightarrow+\infty}E(u_n)=E(u)$, which means $u$ is an energy maximizer on $K$. Thus the claim is proved.

From the claim $u\in M$, thus by \eqref{11}
 \begin{equation}\label{20}
 |u-u_n|_p\geq\varepsilon_0
 \end{equation}
 for any $n$.

 According to Theorem \ref{51}, any maximizer of $E$ on $K$ must be a vortex patch, so
 \begin{equation}
 \int_D |u|^q=\lambda^{q-1}.
 \end{equation}
 Now we show that $\lim_{n\rightarrow+\infty}\int_D|u_n|^q= \int_D|u|^q.$ On the other hand, by weak lower semi-continuity of $L^q$ norm
 \begin{equation}
  \lambda^{q-1}=\int_D|u|^q\leq\varliminf_{n\rightarrow+\infty}\int_D|u_n|^q=\varliminf_{n\rightarrow+\infty}\int_D|\omega^n_0|^q,
 \end{equation}
on the other hand, $\omega^n_0\in K$ gives
  \begin{equation}
\int_D|\omega^n_0|^q=\int_D|\omega^n_0|^{q-1}\omega^n_0\leq\lambda^{q-1}\int_D\omega^n_0=\lambda^{q-1},
 \end{equation}
 so
 \begin{equation}\label{61}
 \lim_{n\rightarrow+\infty}\int_D|u_n|^q= \int_D|u|^q.
 \end{equation}

 That is, $u_n\rightarrow u$ weakly in $L^q(D)$ and $\int_D |u_n|^q\rightarrow\int_D |u|^q$, then immediately we have $u_n\rightarrow u$ in $L^q(D)$ by uniform convexity of $L^q$ norm(recall $q$ is chosen such that $1\leq p<q<+\infty$). By H\"{o}lder inequality we have $u_n\rightarrow u$ in $L^p(D)$, which is a contradiction to \eqref{20}.

\end{proof}

\begin{proof}[Proof of Theorem \ref{101}]
Denote $N=M\verb|\|\{\omega_\lambda\},$ then $dist(\omega_\lambda,N)\geq\delta_0.$ By orbital stability, for any $\varepsilon$, $0<\varepsilon<\frac{\delta_0}{4}$, there exists $\delta>0$, $\delta<\frac{\delta_0}{2}$, such that for any $\omega_0\in K$, $dist(\omega_0,\omega_\lambda)<\delta$, we have $dist(\omega_t, M)<\varepsilon$ for all $t\geq0$. We have

\begin{equation}\label{81}
\min\{dist(\omega_t, \omega_\lambda),dist(\omega_t, N)\}\leq \varepsilon
\end{equation}
 for all $t\geq0$. We claim that
 \begin{equation}\label{85}
 dist(\omega_t, N)>\varepsilon
 \end{equation}
for all $t\geq0$. In fact, suppose that there is $t_1\geq0$ such that $dist(\omega_{t_1}, N)\leq\varepsilon $, then
\begin{equation}
\varepsilon\geq dist(\omega_{t_1}, N)\geq dist(\omega_\lambda,N)-dist(\omega_\lambda,\omega_{t_1})\geq \delta_0-dist(\omega_\lambda,\omega_{t_1}),
\end{equation}
since $\varepsilon<\frac{\delta_0}{4}$, we have
\begin{equation}
dist(\omega_{t_1},\omega_\lambda)>\frac{3}{4}\delta_0.
\end{equation}
That is, $dist(\omega_0,\omega_\lambda)<\delta<\frac{\delta_0}{2}$ and $dist(\omega_{t_1},\omega_\lambda)>\frac{3}{4}\delta_0$, by continuity(recall that $\omega_t\in C([0,+\infty);L^p(D))$ for all $p\in[1,+\infty)$) there exists $t_2$ such that
\begin{equation}\label{82}
dist(\omega_{t_2},\omega_\lambda)=\frac{\delta_0}{2}>\varepsilon,
\end{equation}
thus

\begin{equation}\label{83}
dist(\omega_{t_2},N)\geq dist(\omega_\lambda,N)-dist(\omega_{t_2},\omega_\lambda)\geq \frac{\delta_0}{2}>\varepsilon.
\end{equation}
Combing \eqref{81},\eqref{82} and \eqref{83}, we get a contradiction. Now \eqref{81} and \eqref{85} give

\begin{equation}
dist(\omega_t, \omega_\lambda)\leq \varepsilon
\end{equation}
 for all $t\geq0$ provided $dist(\omega_0,\omega_\lambda)<\delta$, which is the desired result.
\end{proof}

\end{document}